\def\qed{\hfill {\hbox{${\vcenter{\vbox{               
   \hrule height 0.4pt\hbox{\vrule width 0.4pt height 6pt
   \kern5pt\vrule width 0.4pt}\hrule height 0.4pt}}}$}}}
\newtheorem{theorem}{Theorem}
\newtheorem{proposition}[theorem]{Proposition}
\newtheorem{corollary}[theorem]{Corollary}
\theoremstyle{definition}
\newtheorem{example}{Example}
\newtheorem{definition}{Definition}
\newtheorem{remark}{Remark}
\date{}
\title{\Large \textbf{Bikei Invariants and Gauss Diagrams for Virtual Knotted 
Surfaces}}
\author{Sam Nelson\footnote{Email: knots@esotericka.org. Partially supported by Simons Foundation collaboration grant 316709}\and
Patricia Rivera\footnote{Email: patriciariv25@gmail.com}}
\begin{document}
\maketitle

\begin{abstract}
Marked vertex diagrams provide a combinatorial way to represent knotted 
surfaces in $\mathbb{R}^4$; including virtual crossings allows for a 
theory of virtual knotted surfaces and virtual cobordisms. Biquandle counting
invariants are defined only for marked vertex diagrams representing 
knotted orientable surfaces; we extend these invariants to all virtual
marked vertex diagrams by considering colorings by involutory biquandles, 
also known as bikei. We introduce a way of representing marked vertex diagrams
with Gauss diagrams and use these to characterize orientability.
\end{abstract}

\parbox{5.5in} {\textsc{Keywords:} marked vertex diagrams, ch-diagrams, 
knotted surfaces, virtual knotted surfaces, bikei

\smallskip

\textsc{2010 MSC:} 57M27, 57M25}

\section{\large\textbf{Introduction}}\label{I}

\textit{Biquandles} are algebraic structures with axioms motivated by the 
oriented Reidemeister moves. In particular, the set of labelings of the 
semiarcs in a oriented knot diagram by elements of a finite biquandle 
satisfying a certain labeling condition forms a computable 
invariant known as the \textit{biquandle counting invariant}. Biquandles 
were introduced in \cite{FRS} and have been much studied in recent years; 
see \cite{FJK,KR} etc. for more.

\textit{Marked vertex diagrams}, also known as
\textit{ch-diagrams}, are planar diagrams similar to ordinary knot 
diagrams which encode a knotted surface in $\mathbb{R}^4$ \cite{LO,Y,L}. 
It has been recently established (\cite{K}) that two marked vertex diagrams 
represent ambient isotopic surfaces in $\mathbb{R}^4$ if and only if they 
are related by a sequence of the \textit{Yoshikawa moves} introduced in 
\cite{Y}. Marked vertex diagrams can be used to 
represent both closed knotted surfaces and cobordisms between knots and
have advantages over some other popular methods of representing knotted
surfaces in $\mathbb{R}^4$: they are easier to draw than broken surface 
diagrams and require only a single diagram, unlike movie diagrams which
require multiple diagrams. See \cite{LO,JKL,L,Y} for more.

\textit{Virtual knots}, also known as \textit{abstract knots}, are a 
combinatorial generalization of knots including \textit{virtual crossings}
representing genus in the ambient space of the knot. Including virtual 
crossings in marked vertex diagrams yields \textit{virtual knotted surfaces}; 
see \cite{K2,K3,K4} for more.

\textit{Bikei}, also known as \textit{involutory biquandles}, were introduced
in \cite{AN} as a special case of biquandles which can be used to define
invariants of unoriented knots and links. 
Unlike the more general case of biquandles, bikei counting invariants are 
defined for all virtual marked vertex diagrams, regardless of topological type.
In this paper, we study bikei counting invariants for virtual knotted surfaces.

\textit{Gauss diagrams} (and equivalently \textit{Gauss codes}) are a 
computer-friendly way of representing virtual knot diagrams. We define Gauss 
diagrams for marked vertex diagrams and use these to characterize orientability.

The paper is organized as follows. In Section \ref{B} we review the basics 
of bikei. In Section \ref{KS} we review
knotted surfaces and marked diagrams. In Section \ref{RBI} we 
define bikei counting invariants for marked vertex diagrams. In Section 
\ref{G} we introduce Gauss diagrams for marked vertex diagrams and 
use them to characterize orientability. We end in Section \ref{Q} with
some questions for future research.

\section{\large\textbf{Bikei}}\label{B}

We begin with a standard definition. 

\begin{definition}
Let $X$ be a set. A \textit{biquandle structure} on $X$ is a pair of binary 
operations $(x,y)\mapsto x^y, x_y$ such that for all $x,y,z\in X$ we have
\begin{itemize}
\item[(i)] $x^x=x_x$,
\item[(ii)] the maps $\alpha_y,\beta_y:X\to X$ and $S:X\times X\to X\times X$
defined by $\alpha_y(x)=x_y$, $\beta_y(x)=x^y$ and 
$S(x,y)=(y_x,x^y)$
are bijective, and
\item[(iii)] the \textit{exchange laws}
\[\begin{array}{rcl}
(x^y)^{(z^y)} & = & (x^z)^{(y_z)} \\
(x^y)_{(z^y)} & = & (x_z)^{(y_z)} \\
(x_y)_{(z_y)} & = & (x_z)_{(y^z)}
\end{array}.\]
\end{itemize}
A biquandle such that $x_y=x$ for all $x,y$ is a \textit{quandle}. A biquandle
such that $(x^y)^y=x=(x_y)_y$, $x^{yy_x}=x^y$ and $y_{x^y}=y_x$ for all $x,y\in X$
is a \textit{bikei}.
\end{definition}

\begin{remark}
We are using the original notation for biquandles from \cite{FRS}; much
recent work uses similar-looking but different notation resulting in 
less symmetric axioms.
\end{remark}

We will be primarily interested in the case of bikei.

\begin{example} Let $n\in \mathbb{Z}.$
Any group $G$ is a bikei with $x^y=yx^{-1}y$ and $x_y=x$ as well as with
$x_y=x$ and $x^y=yx^{-1}y$.
\end{example}

\begin{example}
Any abelian group $A$ with elements $s,t$ satisfying $s^2=t^2=1$
and $1+t=s(1+t)$ is a bikei with  
\[x^y=tx+(s-t)y,\quad x_y=s x\]
Such a bikei is called an \textit{Alexander bikei}.
\end{example}

Bikei structures on a set $X=\{x_1,\dots, x_n\}$ can be specified using
an $n\times 2n$ matrix encoding the operation tables of the biquandle
operations. More precisely, let $M_{i,j}=k$ where
\[x_k =\left\{\begin{array}{ll}
(x_i)^{x_j} &\ 1\le j\le n \\
(x_i)_{x_j} &\ n+1\le j\le 2n \\
\end{array}\right.\]

\begin{example}
The Alexander bikei $X=\mathbb{Z}_4=\{x_1=1,x_2=2,x_3=3,x_4=0\}$ 
with $s=3$ and $t=1$ has biquandle matrix
\[\left[\begin{array}{rrrr|rrrr}
3 & 1 & 3 & 1 & 3 & 3 & 3 & 3 \\
4 & 2 & 4 & 2 & 2 & 2 & 2 & 2 \\
1 & 3 & 1 & 3 & 1 & 1 & 1 & 1 \\
2 & 4 & 2 & 4 & 4 & 4 & 4 & 4
\end{array}\right]\]
\end{example}

The bikei axioms come from the unoriented Reidemeister moves. We think of
elements of $X$ as labels for the semiarcs in an unoriented  knot or link
diagram with operations as depicted.
\[\includegraphics{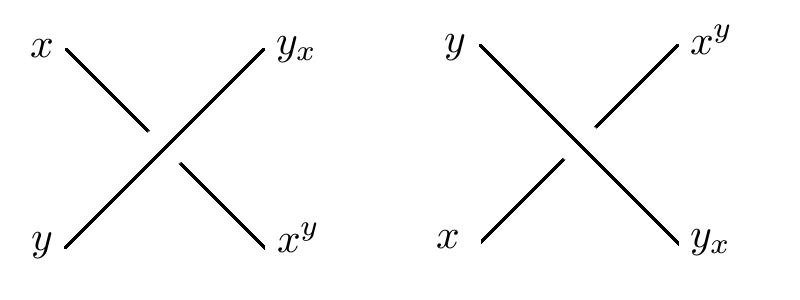}\]
Then the Reidemeister I move requires that for all $x\in X$, we have
$x^x=x_x$:
\[\includegraphics{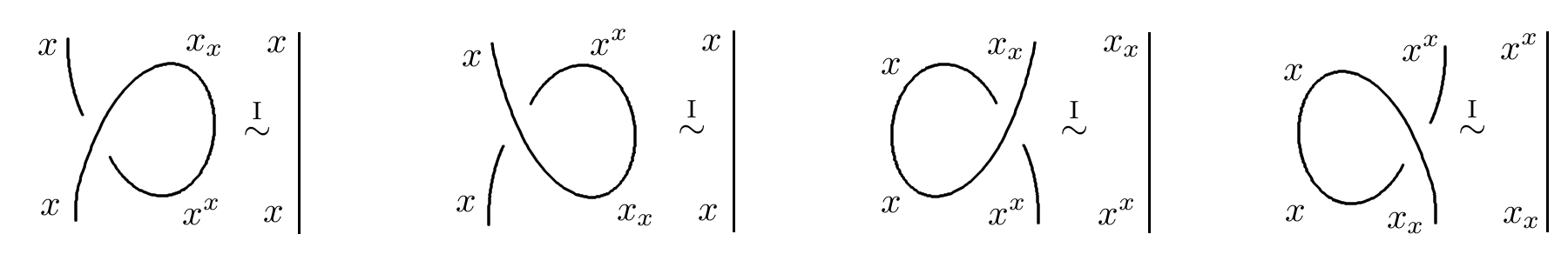}\]
The conditions $(x^y)^y=(x_y)_y=x$,  $x^y=x^{y_x}$  and $x_y=x_{y^x}$
satisfy the Reidemeister II moves.
\[\scalebox{0.95}{\includegraphics{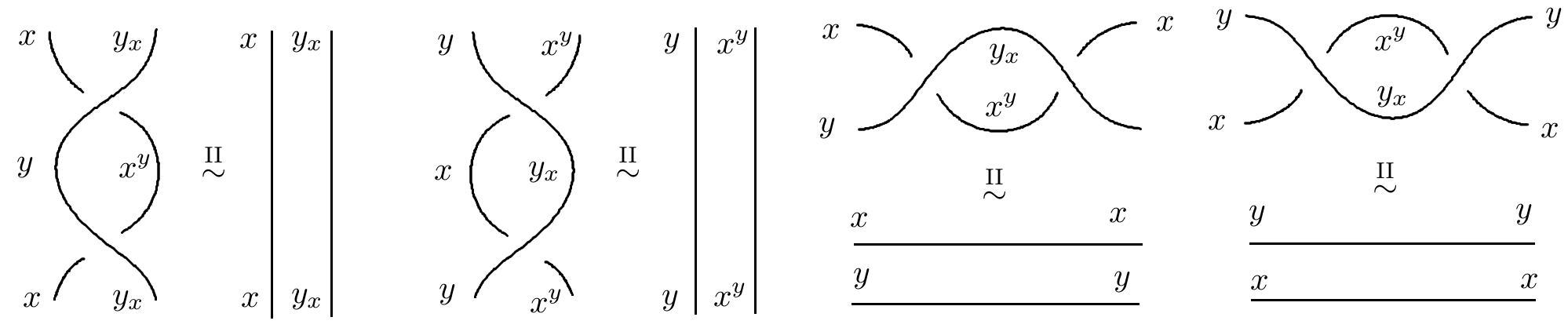}}\]
The Reidemeister III move implies the exchange laws:
\[\includegraphics{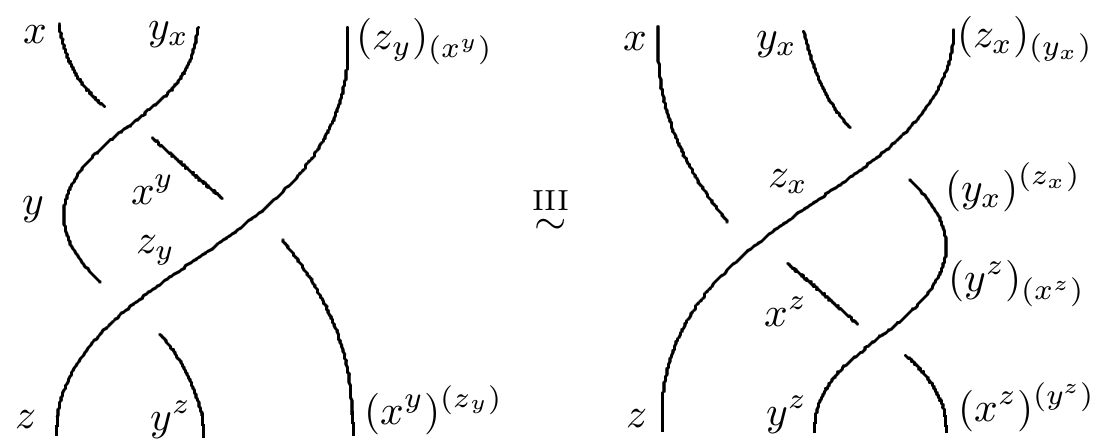}\]

For oriented knots and links, biquandle colorings by a finite biquandle define
a computable invariant known as the \textit{biquandle counting invariant}.
For unoriented knots and links $L$, the number of bikei colorings of a diagram
of $L$ by a finite bikei $X$  is an invariant known as the \textit{bikei 
counting invariant}, denoted $\Phi_X^{\mathbb{Z}}(L).$ We will extend this 
invariant to virtual knotted surfaces in following sections.

\section{\large\textbf{Knotted Surfaces}}\label{KS}

A \textit{knotted surface} is a smoothly embedded compact surface 
$\Sigma\subset \mathbb{R}^4$ with finitely many components. Knotted surfaces 
can be represented with \textit{broken surface diagrams} analogous to
knot diagrams where a broken sheet indicates the sheet crosses under
in the fourth dimension much as a broken strand in a knot diagram indicates
the strand crossing under in the third dimension. The self-intersection set
in the projection into $\mathbb{R}^3$ can include endpoints and triple points
as well as closed curves. Alternatively, knotted surfaces
can be represented with \textit{movie diagrams} and \textit{braid charts};
see \cite{CKS} for much more.
\[\includegraphics{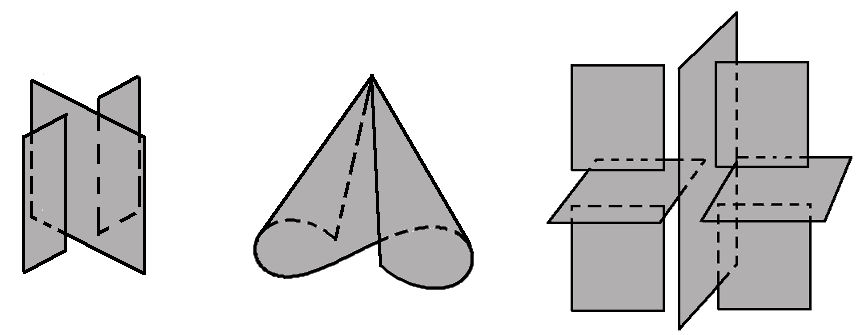}\]

Every knotted surface can be moved by ambient isotopy into a position such
that all of its maxima in the $x_3$ (say) direction are in the $x_3=1$ 
hyperplane, all of its minima are in the $x_3=-1$ hyperplane and all of its 
saddle points are in the $x_3=0$ hyperplane; such a position is known as 
a \textit{hyperbolic splitting}. In particular each cross-section of the 
surface by a hyperplane of the form $x_3=\epsilon$ for 
$\epsilon\in(-1,0)\cup(0,1)$ is an unlink, with the $(x_1,x_2)$ 
cross-sections forming Reidemeister move sequences ending with crossingless
unlink diagrams near $\epsilon=\pm 1$. 

We can represent a knotted surface in such a position with a \textit{marked 
vertex diagram} or
\textit{ch-diagram}, a diagram consisting of ordinary crossings together with
\textit{saddle crossings} representing saddle points:
\[\includegraphics{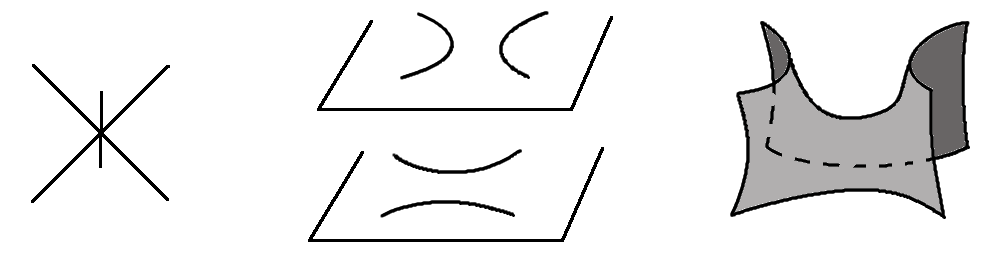}\]
To recover a broken surface diagram from a marked vertex
diagram, we resolve the 
saddle crossings into saddles with the crossings resolving into crossed sheets;
near the $x_3=0$ hyperplane, we have a cobordism between unlinks. These unlinks
then resolve over the intervals $x_3\in(-1,0)\cup (0,1)$ into disjoint circles
which we cap off with maxima and minima. See \cite{L,LO,Y} for more.
\[\includegraphics{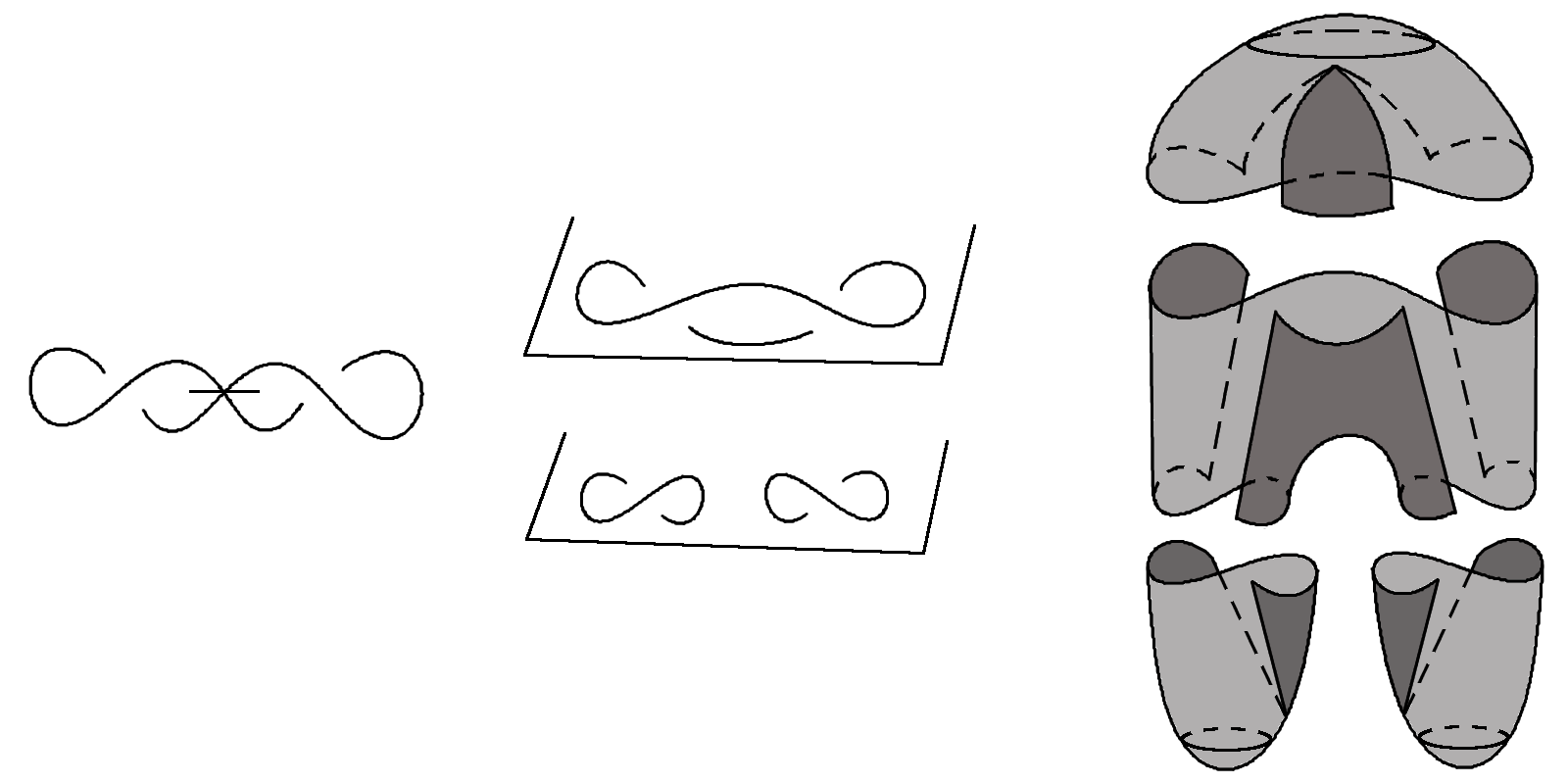}\]

\begin{remark}
Marked vertex diagrams which yield nontrivial knots or links $L,L'$ after 
smoothing the saddle crossings do not 
correspond to knotted closed surfaces but to \textit{corbordisms} between 
$L$ and $L'$, i.e., surfaces whose boundaries are $L\cup L'$.
\end{remark}

In \cite{Y}, Yoshikawa introduced a set of moves now known as \textit{Yoshikawa 
moves} and conjectured that two knotted surfaces are ambient isotopic iff their
marked vertex diagrams are related by the Yoshikawa moves. This conjecture has recently
been established \cite{K}. There are eight moves, the first three of which
are the usual Reidemeister moves:
\[\includegraphics{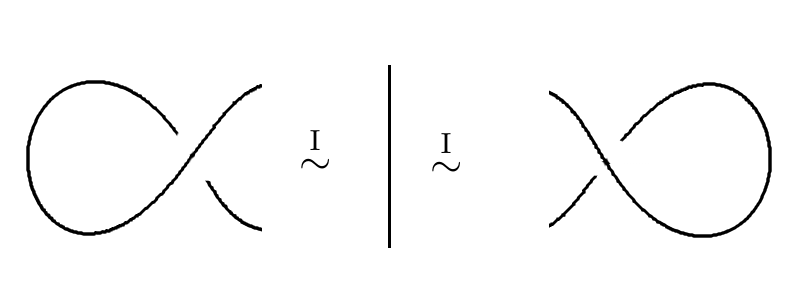}\quad\quad\quad
\includegraphics{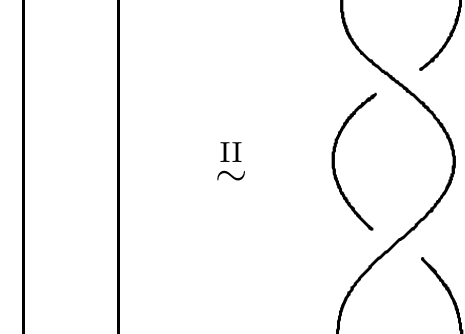}\]
\[\includegraphics{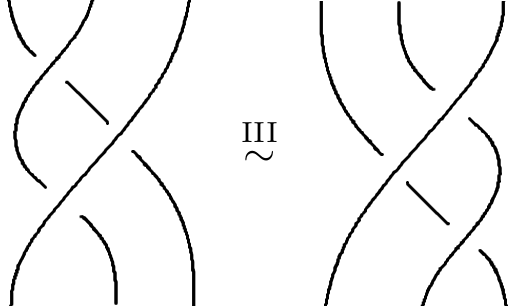}\quad\quad\quad
\includegraphics{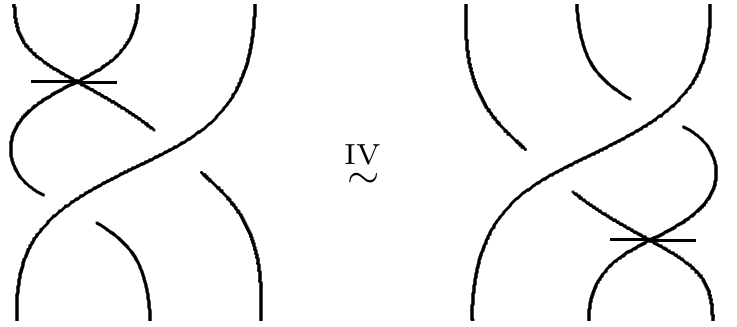}\]
\[\includegraphics{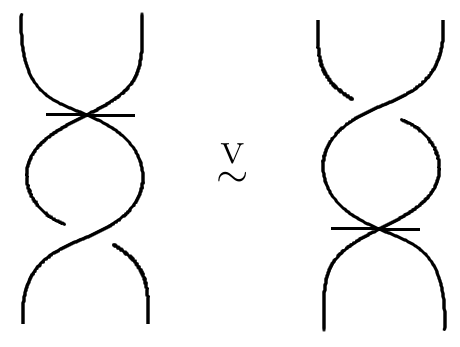}\quad\quad\quad
\includegraphics{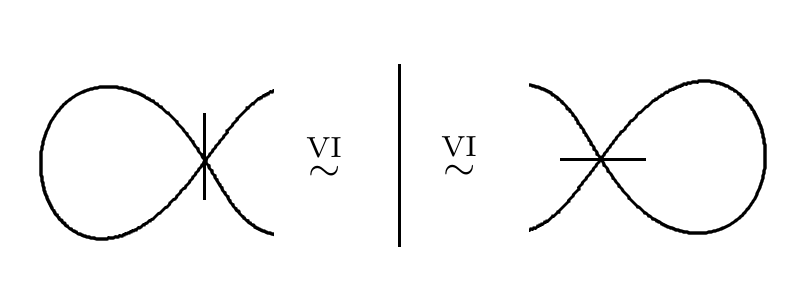}\]
\[\includegraphics{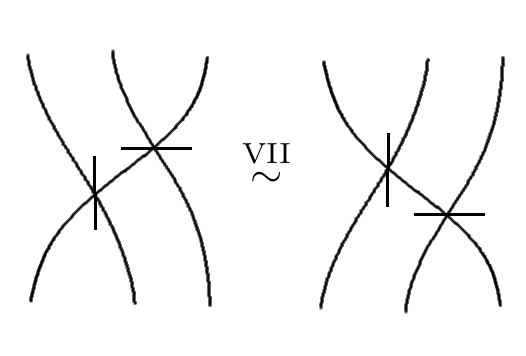}\quad\quad\quad
\includegraphics{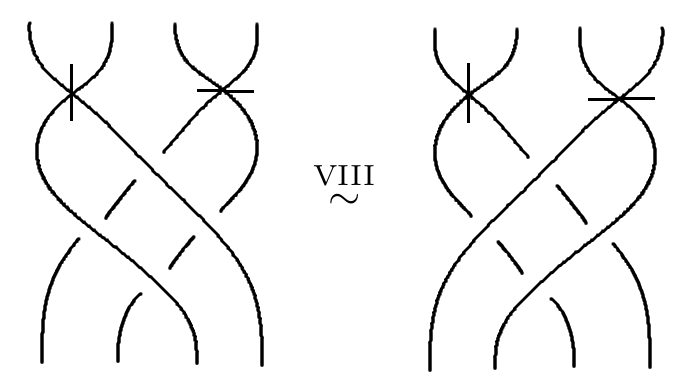}\]

In \cite{Y}, marked vertex diagrams are required to yield unlink diagrams 
after smoothing
the saddles in order to obtain closed knotted surfaces; as we have observed, 
relaxing this requirement yields cobordisms between the links obtained after 
smoothing. In \cite{K4} this idea is used to extend the notion of cobordism 
to the case of \textit{virtual knots}, knots and links with \textit{virtual 
crossings}, which represent genus in the ambient space rather than points where 
the knot is close to itself within the ambient space. We will not break our
semiarcs at virtuals crossings, so the biquandle labeling rule is as depicted.
\[\includegraphics{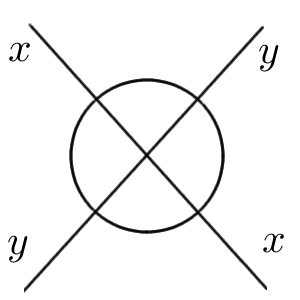}\]
Virtual crossings 
interact with classical and saddle crossings via the \textit{detour move}
\[\includegraphics{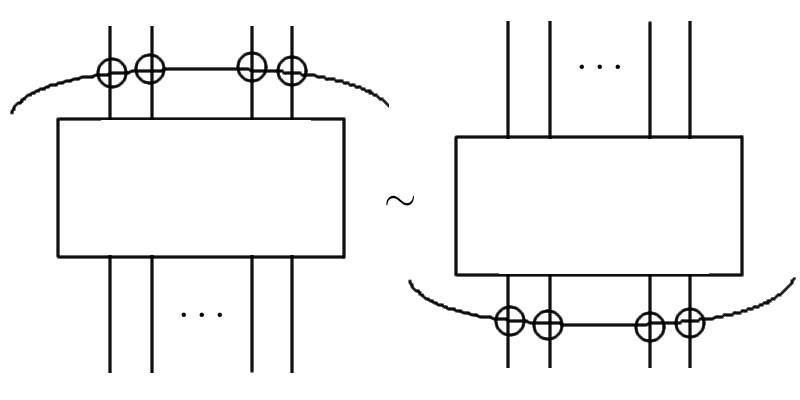}\]
which says that any portion of the knot with only virtual crossings can be 
replaced with any other strand with the same endpoints and only virtual 
crossings.
In particular, marked vertex diagrams with virtual crossings represent 
cobordisms between the virtual links obtained by smoothing the saddle crossings.

Finally, we note that there is an analog of crossing number for virtual marked 
vertex diagrams; in \cite{Y}, the minimal number of classical crossings for a
given surface-knot type $K$ is denoted $c(K)$, and the minimal number of saddle
crossings is denoted $h(K)$ (for ``hyperbolic points''); then the 
\textit{$ch$-index} is $ch(K)=c(K)+h(K)$. 

\begin{definition}
The \textit{virtual ch-index} of a virtual knotted surface $K$, $vch(K)$, is 
the minimal total number of virtual, classical, and saddle crossings over all 
diagrams $D$ of $K$. 
\end{definition}


\section{\large\textbf{Bikei Counting Invariants}}\label{RBI}

Let $X$ be a bikei and $L$ a marked vertex diagram. An $X$-labeling 
of $L$ is an assignment of elements of $X$ to the semiarcs of $L$ (the portions
of the diagram between the crossing points) satisfying the rules pictured below:
\[\includegraphics{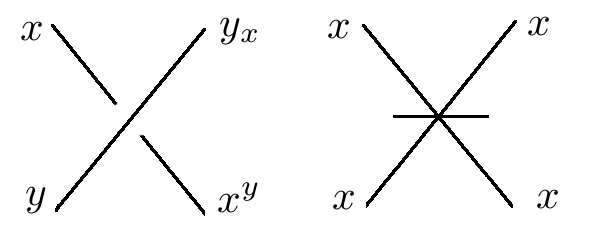}\]

Checking the remaining Yoshikawa moves, we have
\[\includegraphics{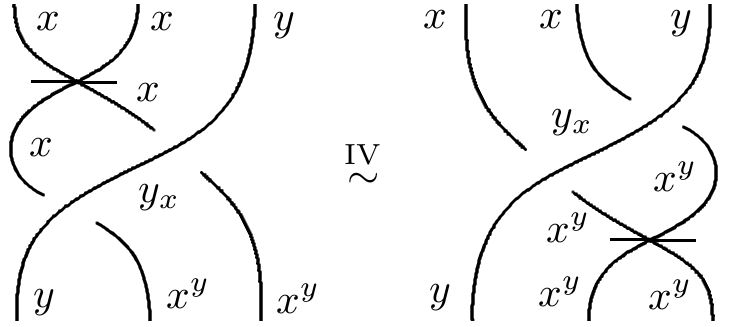}\quad \includegraphics{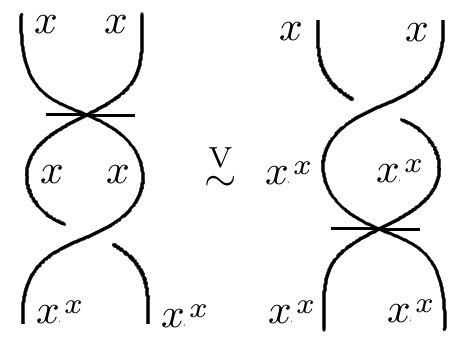}\]
\[\includegraphics{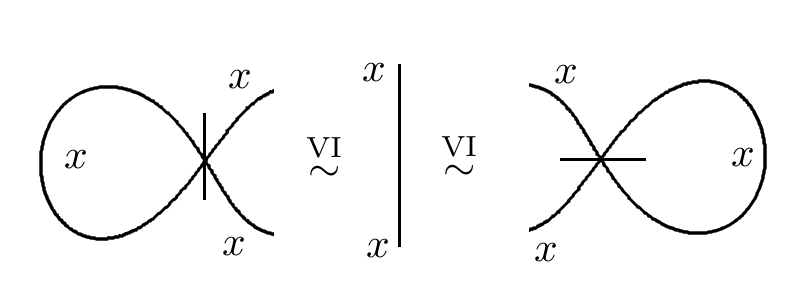}\quad \includegraphics{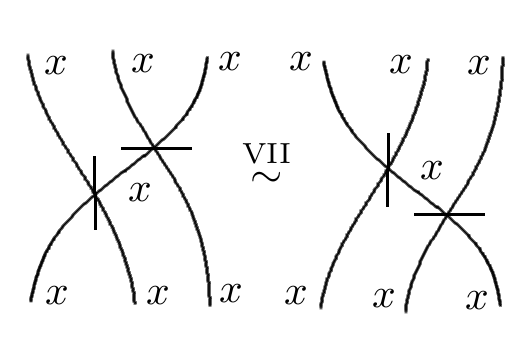}\]
\[\includegraphics{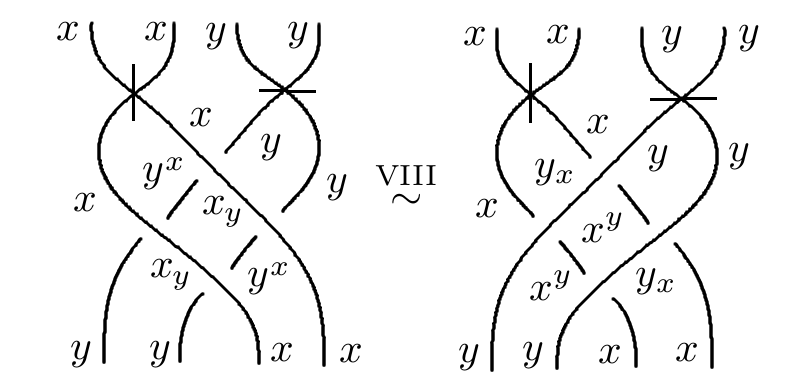}\]
Moreover, $X$-labelings are unaffected by the detour move.Thus we obtain:

\begin{theorem}
If $X$ is a bikei and $L$ and $L'$ are marked vertex diagrams related by 
Yoshikawa moves then for every 
$X$-labeling of $L$ there is a unique corresponding $X$-labeling of $L'$.
\end{theorem}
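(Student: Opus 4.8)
The plan is to verify invariance of $X$-labelings move by move through the eight Yoshikawa moves plus the detour move, establishing a bijection between the labeling sets of $L$ and $L'$ in each case. The overall structure is the standard one for coloring invariants: it suffices to treat the case where $L$ and $L'$ differ by a single Yoshikawa move, since a bijection for each individual move composes to a bijection for any finite sequence of moves. For each move, one fixes a labeling on the portion of the diagram outside the disk where the move is performed, and checks that the bikei axioms force a unique extension of that labeling to the inside of the disk on each side; equating the two gives the correspondence.

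First I would dispatch the three classical Reidemeister moves, which is precisely the content already recorded in Section~\ref{B}: Reidemeister~I uses axiom (i), $x^x=x_x$; Reidemeister~II uses $(x^y)^y=x=(x_y)_y$ together with $x^{y y_x}=x^y$ and $y_{x^y}=y_x$ (the extra bikei relations, which say the $S$-map is an involution on the relevant coordinates); and Reidemeister~III reduces to the exchange laws in axiom (iii). Next I would handle the Yoshikawa moves involving saddle crossings, using the labeling rule for saddle crossings displayed in Section~\ref{RBI} (the picture \texttt{sn-pr-22.png}). The key point is that a saddle crossing, like a classical crossing, is governed by the same pair of operations $x\mapsto x^y, x_y$ on the semiarc labels (this is exactly why bikei, rather than merely quandles, are the right structure: the unoriented nature of the saddle forces the symmetry relations $(x^y)^y=x$ etc.). For each of the remaining moves --- the moves pictured in \texttt{sn-pr-34.png}, \texttt{sn-pr-35.png}, \texttt{sn-pr-32.png}, \texttt{sn-pr-33.png}, \texttt{sn-pr-31.png} --- I would write out the forced labels on both sides of the move and check that the resulting system of equations is equivalent; these reduce to the bikei axioms (particularly the involutivity relations and the exchange laws) combined with the compatibility of the saddle rule with the crossing rule. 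Finally, the detour move: since semiarcs are not broken at virtual crossings and the labeling rule at a virtual crossing simply transports labels along strands (picture \texttt{sn-pr-24.png}), any two strands with the same endpoints and only virtual crossings induce the same label transport, so the detour move is label-transparent.

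The main obstacle I anticipate is the bookkeeping for the two most complicated saddle moves --- those that mix a saddle crossing with a classical crossing (the analogues of Yoshikawa's moves involving a saddle passing a crossing or two saddles interacting). There one has several semiarcs meeting in a small disk, and one must carefully orient (in the informal, unoriented-diagram sense) and name each semiarc, write the label on each in terms of the boundary labels via the operations $\cdot^\cdot$ and $\cdot_\cdot$, and then verify that the identity needed is a consequence of axioms (i)--(iii) and the bikei relations. This is not conceptually hard but is error-prone; the safeguard is that each such identity, once written down, should be a direct algebraic consequence of the exchange laws after applying the involutivity relations to cancel squared operations. Because every move's correspondence is a bijection (each side determines the other uniquely, the inverse move giving the inverse bijection), composing over a sequence of moves yields the claimed unique correspondence, and the theorem follows.
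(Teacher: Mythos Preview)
Your proposal is correct and follows essentially the same approach as the paper: the paper establishes the theorem by displaying labeled diagrams for each of the non-Reidemeister Yoshikawa moves (the pictures you cite), having already verified the Reidemeister moves in Section~\ref{B}, and then noting that the detour move leaves $X$-labelings unaffected. Your write-up simply spells out in words what the paper conveys through those pictures.
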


\begin{corollary}
Let $X$ be a finite bikei. The number of $X$-labelings of a virtual
marked vertex diagram is an invariant of virtual knotted surfaces.
\end{corollary}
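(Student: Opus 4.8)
The plan is to deduce the Corollary directly from the preceding Theorem together with the characterization of virtual knotted surfaces up to moves. First I would recall that, by the result of \cite{K} (extended to the virtual setting via the detour move as in \cite{K4}), two virtual marked vertex diagrams represent the same virtual knotted surface if and only if they are connected by a finite sequence of Yoshikawa moves and detour moves. Hence it suffices to show that the number of $X$-labelings is unchanged by each single such move.

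For a single move, the Theorem supplies, for each $X$-labeling of $L$, a unique corresponding $X$-labeling of $L'$; applying the same statement to the reverse move gives an assignment in the other direction. The one point that needs care is checking that these two assignments are genuinely mutually inverse bijections rather than merely maps in each direction: this follows because outside the local disk in which the move is performed the labeling is untouched, while inside the disk the labeling-extension rules verified in the figures following the Theorem (and the detour-move remark) determine the new labels uniquely from the boundary data in a way that is symmetric in $L$ and $L'$. Consequently $|\{X\text{-labelings of } L\}| = |\{X\text{-labelings of } L'\}|$.

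Composing these bijections along any finite sequence of moves connecting two diagrams of the same virtual knotted surface $K$ shows that the number of $X$-labelings depends only on $K$; since $X$ is finite and any diagram has finitely many semiarcs, this number is a well-defined nonnegative integer, which we may take as the definition of $\Phi_X^{\mathbb{Z}}(K)$, extending the bikei counting invariant to all virtual knotted surfaces.

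I expect that essentially all of the content is already contained in the Theorem and the move-by-move figure checks preceding it, so the Corollary is largely a bookkeeping step. The only things requiring genuine attention are (a) making sure the move set invoked is complete, i.e.\ that the virtual detour move is included alongside the eight Yoshikawa moves, and (b) upgrading the Theorem's ``unique corresponding labeling'' statement to an honest bijection by invoking reversibility of the moves; the finiteness of $X$ is what guarantees the resulting invariant is an integer rather than merely a set.
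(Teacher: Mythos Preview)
Your proposal is correct and follows essentially the same approach as the paper: the paper states the Corollary with no separate proof, treating it as an immediate consequence of the preceding Theorem (unique corresponding labeling under each Yoshikawa move) together with the observation that $X$-labelings are unaffected by the detour move. Your write-up simply makes explicit the bookkeeping (reversibility of moves gives a bijection, compose along finite sequences, finiteness of $X$ gives an integer) that the paper leaves implicit.
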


\begin{example}
The diagram $8_1$ 
below represents the \textit{spun trefoil}, the surface swept out
in $\mathbb{R}^4$ by spinning a trefoil knot about an axis. 
\[\includegraphics{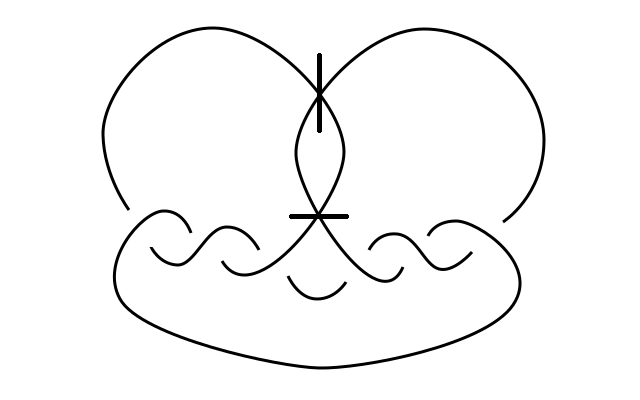}\]
It is distinguished from the unknotted sphere $0_1$
by the bikei counting invariant with bikei $X$ given by the matrix
\[\left[\begin{array}{rrrr|rrrr}
1 & 1 & 1 & 1 & 1 & 1 & 1 & 1 \\
3 & 2 & 2 & 2 & 3 & 2 & 4 & 3 \\
2 & 3 & 3 & 3 & 2 & 4 & 3 & 2 \\
4 & 4 & 4 & 4 & 4 & 3 & 2 & 4
\end{array}\right]\]
with $\Phi_X^{\mathbb{Z}}(8_1)=10\ne 4= \Phi_x^{\mathbb{Z}}(0_1).$
\end{example}

One important difference between surface knot theory and classical knot theory
is that in classical knot theory, there is only one object to knot, namely the 
circle $S^1$ (or disjoint unions of copies of $S^3$), while in surface knot 
theory there are already infinitely many topologically distinct surfaces 
before knotting, characterized by cross-cap number $c$ and genus $g$. 
\[\includegraphics{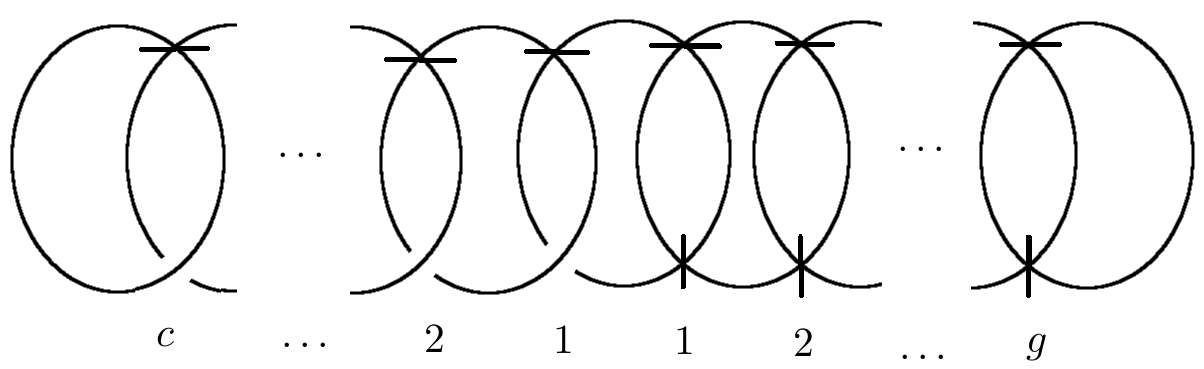}\]

An orientation for a virtual marked vertex diagram is a choice of orientation 
for each semiarc such that at classical crossing we have a ``pass-through''
rule at at each saddle crossing we have a ``source-sink rule'':
\[\includegraphics{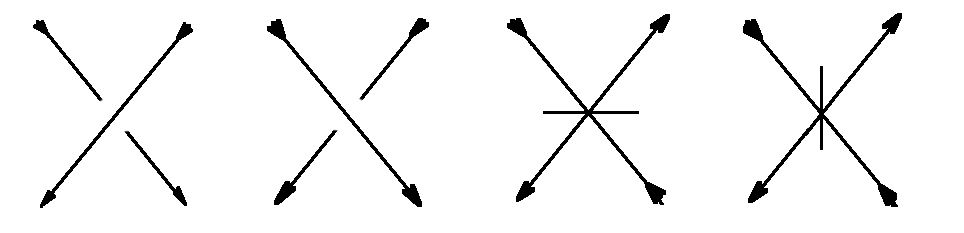}\]
These define a local orientation on the surface by defining a choice of normal
vector on each sheet.
\[\includegraphics{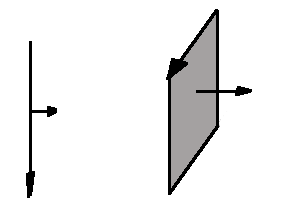}\]
If the surface is not orientable, these local orientations cannot be 
extended to a global orientation.

\begin{definition}
The closed curves obtained by ``passing through'' both saddle and classical 
crossings in a virtual marked vertex diagram are its \textit{na\"ive 
components}.
\end{definition}

\begin{remark}\label{rem1}
We note that the number of na\"ive components is not invariant under Yoshikawa
moves, since the following move splits or merges two na\"ive components:
\[\includegraphics{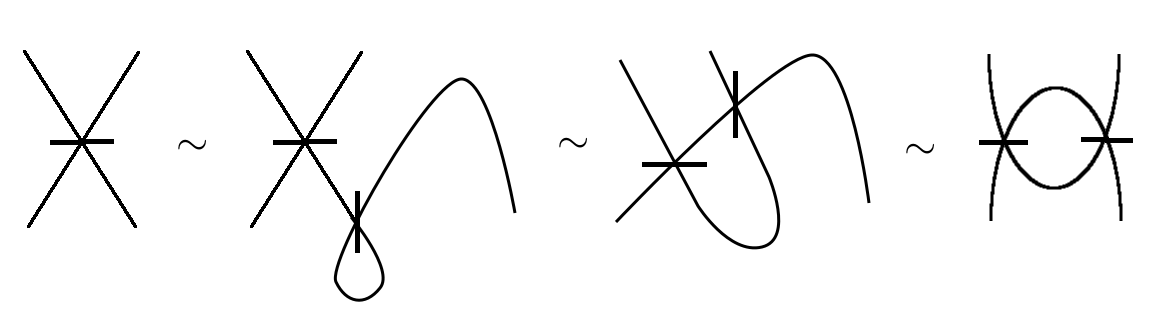}\]
See also \cite{Y}.
\end{remark}

Since an unknotted orientable surface $S$ of genus $g$ has a diagram with
only saddle crossings, every coloring of $S$ by any bikei $X$ is monochromatic. 
Thus we have:

\begin{proposition}
For any unknotted orientable surface $S$ of genus $g\in\mathbb{N}$ and bikei
$X$, the bikei counting invariant of $S$ is $\Phi_X^{\mathbb{Z}}=|X|$.
\end{proposition}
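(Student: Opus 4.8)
The plan is to show that any $X$-labeling of the prescribed diagram of $S$ must assign the same element of $X$ to every semiarc, so that the labelings are in bijection with the elements of $X$. First I would invoke the fact (recalled in the paragraph preceding the proposition) that an unknotted orientable surface of genus $g$ admits a marked vertex diagram $D$ with no classical crossings whatsoever: it is built entirely from saddle crossings, arising from the hyperbolic splitting in which a collection of unknotted, unlinked circles is connected by $g$ saddles (plus, if we allow it, some trivial births/deaths) and then capped off. On such a diagram, the only labeling constraint is the one coming from the saddle crossings.

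Next I would examine that constraint explicitly. At a saddle crossing, the two incoming semiarcs on one side are joined to the two outgoing semiarcs on the other side with no $x^y$ or $x_y$ operation applied — the labeling rule for a saddle, as pictured in the ``Bikei Counting Invariants'' section, simply identifies the label entering one end of a strand with the label leaving the corresponding end; in other words, a saddle forces the two strands passing through it to share labels pairwise. Combining this with the fact that the underlying $4$-valent graph of $D$ (the saddle points together with the arcs between them) is connected on each connected component of $S$, a straightforward propagation argument shows the label is constant on each component. Since $S$ is assumed connected (an unknotted orientable surface of a fixed genus is connected; if one wished to allow disconnected surfaces the statement would read $|X|^{(\#\text{components})}$), every $X$-labeling is monochromatic, hence determined by the single element assigned, giving exactly $|X|$ labelings and $\Phi_X^{\mathbb{Z}}(S)=|X|$.

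I would then note that by the Corollary proved just above, the count of $X$-labelings is independent of which diagram of $S$ we use, so computing it on the all-saddles diagram $D$ suffices; no further checking of Yoshikawa moves is needed here. The only genuine point requiring care — the ``main obstacle,'' such as it is — is to state precisely the saddle labeling rule and to confirm that it really does propagate a single label across the entire saddle graph rather than, say, permuting two labels around a cycle; this is immediate once one reads off from the picture that the saddle acts as the identity on labels (unlike a classical crossing), but it is worth writing out so that the monochromaticity claim from the preceding paragraph of the text is fully justified rather than merely asserted.
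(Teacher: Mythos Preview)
Your approach is the same as the paper's: take a saddle-only diagram of the unknotted genus-$g$ surface, observe that every bikei labeling of such a diagram must be monochromatic, and conclude $\Phi_X^{\mathbb{Z}}(S)=|X|$. The paper in fact gives no separate proof; the sentence immediately preceding the proposition is the entire argument.

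One point in your write-up needs correcting. The saddle labeling rule is not a ``pairwise pass-through'' as at a virtual crossing; the rule pictured in the Bikei Counting Invariants section forces \emph{all four} semiarcs incident to a saddle vertex to carry the \emph{same} element of $X$. With the weaker pass-through rule you describe, connectedness of the underlying $4$-valent graph would not force monochromaticity: the label would only be constant along each na\"ive component, and a saddle-only diagram may have more than one of those (cf.\ Remark~\ref{rem1}). With the correct rule the propagation step is immediate --- connectedness of the diagram gives a single color --- so your argument actually becomes simpler once the rule is stated accurately.
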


For non-orientable unknotted surfaces, we likewise need monochromatic colorings,
but the classical crossings present an obstruction: a monochromatic coloring
of such a surface by $x\in X$ is only valid if $x=x^x=x_x$. Then we have:

\begin{proposition}
For any unknotted non-orientable surface $S$ of genus $g\in\mathbb{N}$ 
and cross-cap number $c$ and bikei
$X$, the bikei counting invariant of $S$ is $\Phi_X^{\mathbb{Z}}=|F|$ where
\[F=\{x\in X\ |\ x^x=x_x=x\}.\]
\end{proposition}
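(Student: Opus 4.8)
The plan is to exhibit an explicit standard diagram for an unknotted non-orientable surface of genus $g$ and cross-cap number $c$, count its bikei colorings directly, and then invoke Yoshikawa-move invariance (the Corollary) to conclude that this count is the invariant. First I would recall, as in the companion Proposition for the orientable case, that an unknotted closed surface admits a marked vertex diagram built entirely from ``standard pieces'': saddle crossings that create the handles contributing genus, together with the minimal gadgets needed to realize each cross-cap. The key structural point is that in such a diagram the only classical crossings appearing are those inside the cross-cap gadgets; there are no ``linking'' classical crossings, since the surface is unknotted.

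Next I would analyze the coloring constraints piece by piece. At every saddle crossing the bikei labeling rule (together with the source-sink structure, or directly from the pictured rule) forces the four incident semiarcs to carry the same label, exactly as in the orientable case; hence any $X$-coloring must be constant on each connected region of the diagram not separated by a classical crossing, and in the standard diagram this means the coloring is globally monochromatic. So every valid coloring assigns a single element $x\in X$ to all semiarcs. The remaining condition is that this monochromatic assignment be consistent at the classical crossings inside the cross-cap gadgets: feeding $x$ into the classical crossing labeling rule $\{x^y, x_y\}$ with $y=x$ returns the outgoing labels $x^x$ and $x_x$, and consistency of the monochromatic coloring requires $x^x=x$ and $x_x=x$. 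Thus the set of valid colorings is in bijection with $F=\{x\in X\mid x^x=x_x=x\}$, giving $\Phi_X^{\mathbb{Z}}(S)=|F|$.

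Finally I would note that $F$ is independent of $g$ and $c$ because $g$ and $c$ only affect how many saddle crossings (for genus) and how many cross-cap gadgets appear, and each additional gadget merely imposes the \emph{same} condition $x^x=x_x=x$ on the already-monochromatic label; it adds no new colorings and removes none beyond those already excluded. Invariance under Yoshikawa moves (Corollary) then guarantees that the count computed from this standard diagram equals the invariant for \emph{any} diagram of $S$.

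The main obstacle I anticipate is pinning down precisely that the standard diagram for a non-orientable unknotted surface really does have \emph{only} the cross-cap classical crossings and no others, and that the saddle-crossing rule genuinely forces global monochromaticity on that diagram (rather than merely local constancy between crossings). This is essentially a bookkeeping argument about the standard surface diagrams --- it parallels the reasoning used for the orientable Proposition --- but it must be done carefully enough to be sure no coloring is missed: one must check that the cross-cap gadget, when its boundary semiarcs are all labeled $x$, admits a valid interior labeling precisely when $x\in F$, with that labeling unique. Once that is verified, the counting is immediate.
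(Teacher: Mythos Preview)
Your proposal is correct and follows essentially the same approach as the paper: the paper's argument (given in the sentence preceding the proposition, together with the displayed standard diagrams for genus and cross-cap) is exactly that the standard unknotted diagram forces monochromatic colorings via the saddle crossings, while the classical crossing inside each cross-cap gadget imposes the additional constraint $x=x^x=x_x$. Your write-up is more careful and explicit about the bookkeeping (global monochromaticity, uniqueness of the interior labeling, independence of $g$ and $c$), but the underlying strategy is identical.
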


\begin{corollary}
If a bikei $X$ is a \textit{kei}, i.e. an involutory quandle, then
$\Phi_X^{\mathbb{Z}}=|X|$ for all unknotted surfaces $S$.
\end{corollary}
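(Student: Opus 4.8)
The plan is to reduce the corollary directly to the two preceding propositions by characterizing the fixed set $F=\{x\in X\mid x^x=x_x=x\}$ when $X$ is a kei. First I would recall that a \emph{kei} is a bikei in which the right operation is trivial, i.e. $x_y=x$ for all $x,y\in X$; this is precisely the condition that a bikei also be a quandle. So for a kei we automatically have $x_x=x$ for every $x\in X$, and the defining condition of $F$ collapses to the single requirement $x^x=x$.

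Next I would invoke the quandle/kei idempotency axiom. In the biquandle definition given in the paper, axiom (i) states $x^x=x_x$ for all $x$. For a kei, combining $x^x=x_x$ with $x_x=x$ immediately yields $x^x=x$ for every $x\in X$. Hence the obstruction set $F$ is all of $X$, i.e. $F=X$.

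Now the corollary follows by cases on the unknotted surface $S$. If $S$ is orientable of genus $g$, then by the first proposition $\Phi_X^{\mathbb{Z}}(S)=|X|$ with no hypothesis on $X$ needed. If $S$ is non-orientable with genus $g$ and cross-cap number $c$, then by the second proposition $\Phi_X^{\mathbb{Z}}(S)=|F|$, and since we have just shown $F=X$ when $X$ is a kei, this gives $\Phi_X^{\mathbb{Z}}(S)=|X|$ as well. Either way $\Phi_X^{\mathbb{Z}}(S)=|X|$ for all unknotted surfaces $S$, which is the claim.

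There is no real obstacle here; the only point requiring a moment's care is making sure the equality $x^x=x$ genuinely follows from the stated axioms rather than being an additional assumption, which is why I would be explicit that kei $=$ quandle $+$ involutory forces $x_y=x$, and that axiom (i) then upgrades this to $x^x=x$. One could also remark that the corollary is consistent with the fact that a kei colors a diagram independently of the choice of over/under information at classical crossings, so classical crossings impose no constraint beyond what the saddle crossings already impose, making monochromatic colorings always valid; but the two-line axiomatic argument above is the cleanest route and is the one I would write up.
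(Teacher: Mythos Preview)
Your argument is correct and is exactly the route the paper intends: the corollary is stated without proof because it follows immediately from the two preceding propositions once one observes that for a kei the set $F=\{x\in X\mid x^x=x_x=x\}$ equals all of $X$, which you derive cleanly from $x_y=x$ together with axiom~(i). The paper even makes this observation explicitly in the sentence following the corollary (``$F$ can be all of $X$ (as in the quandle case where $x^x=x$ for all $x\in X$)''), so your write-up matches the intended reasoning.
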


The set $F=\{x\in X\ |\ x^x=x_x=x\}$ is the union of singleton sub-bikeis
of $X$. $F$ can be all of $X$ (as in the quandle case where $x^x=x$ for all 
$x\in X$), or it could be a proper sub-bikei; it can even be empty.


\begin{example}\label{ex2col}
Let $X=\mathbb{Z}_2$ and define $x^y=x_y = x+1$. Then $\Phi_X^{\mathbb{Z}}(S)=2$
for all unknotted orientable surfaces and $\Phi_X^{\mathbb{Z}}(S)=0$ for all
unknotted non-orientable surfaces. A coloring by $X$ will be called a 
\textit{$2$-coloring}.
\end{example}

Finally, we note that classical crossing changes do not affect 2-colorability.
Thus, we have:

\begin{proposition}\label{prop:2c}
Knotted orientable surfaces which can be unknotted by classical crossing 
changes are 2-colorable; knotted non-orientable surfaces which can be 
unknotted by classical crossing changes are not 2-colorable.
\end{proposition}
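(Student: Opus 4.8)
The plan is to reduce the proposition to the computation of $\Phi_X^{\mathbb{Z}}$ on unknotted surfaces recorded in Example~\ref{ex2col}, using the fact that a single classical crossing change on a marked vertex diagram changes neither the number of $2$-colorings nor the orientability of the surface represented. For the first of these, note that the $2$-coloring rule of Example~\ref{ex2col} assigns to each incoming semiarc with label $a$ at a classical crossing the outgoing label $a+1$ on the same strand, with no dependence on which strand passes over. Switching the over/under decoration at a crossing leaves the underlying $4$-valent graph and all incidences of semiarcs intact, so a labeling solves the $2$-coloring equations for a diagram $D$ if and only if the identical labeling solves them for the crossing-changed diagram $D'$; hence $\Phi_X^{\mathbb{Z}}(D)=\Phi_X^{\mathbb{Z}}(D')$, and in particular $D$ admits a $2$-coloring iff $D'$ does. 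This is essentially the ``note'' preceding the statement.

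Next I would check that a classical crossing change preserves orientability. An orientation of a virtual marked vertex diagram is an orientation of the semiarcs satisfying the pass-through rule at classical crossings and the source-sink rule at saddle crossings; both conditions are phrased purely in terms of the $4$-valent graph and the positions of the saddle markings, and make no reference to the over/under data at classical crossings. Since a crossing change only toggles that data, the orientations of $D$ and of $D'$ form literally the same set, so $D$ is orientable precisely when $D'$ is, and hence so are the surfaces they represent.

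Assembling the pieces: suppose $S$ is a knotted orientable surface and some diagram $D$ of $S$ becomes, after a sequence of classical crossing changes, a diagram $D'$ of an unknotted surface. By the orientability invariance, $D'$ represents an unknotted \emph{orientable} surface, so $\Phi_X^{\mathbb{Z}}(D')=2$ by Example~\ref{ex2col}; by the crossing-change invariance of the $2$-coloring count, $\Phi_X^{\mathbb{Z}}(S)=\Phi_X^{\mathbb{Z}}(D)=\Phi_X^{\mathbb{Z}}(D')=2>0$, so $S$ is $2$-colorable. Symmetrically, if $S$ is non-orientable and such a $D'$ exists, then $D'$ represents an unknotted non-orientable surface, so $\Phi_X^{\mathbb{Z}}(D')=0$ by Example~\ref{ex2col}, whence $\Phi_X^{\mathbb{Z}}(S)=0$ and $S$ is not $2$-colorable.

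I expect the step needing the most care to be the orientability claim: one must verify that the pass-through and source-sink conditions defining a diagram orientation really are independent of the over/under choices at classical crossings, so that toggling a crossing can neither introduce nor obstruct a consistent global orientation (equivalently, that the non-orientability obstruction is unchanged by a crossing change). Once that is in hand, the remainder is bookkeeping with Example~\ref{ex2col} and the $2$-coloring invariance.
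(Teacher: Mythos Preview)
Your argument is correct and follows the same route as the paper: the proposition is recorded immediately after the observation that classical crossing changes do not affect $2$-colorability, together with the values from Example~\ref{ex2col}. The paper gives no further details, so your write-up is in fact more complete---in particular, you make explicit the point that a classical crossing change also preserves (diagrammatic) orientability, which the paper uses tacitly to ensure that an orientable (resp.\ non-orientable) surface unknots to an orientable (resp.\ non-orientable) unknotted surface before invoking Example~\ref{ex2col}.
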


\section{Gauss diagrams for virtual knotted surfaces}\label{G}

In light of proposition \ref{prop:2c} it is natural to ask whether a virtual
knotted surface is orientable if and only if it is 2-colorable. For virtual
knots, it is known that classical crossing change is not an unknotting
operation; for instance, the \textit{Kishino virtual knot} below cannot be
unknotted by classical crossing change.
\[\includegraphics{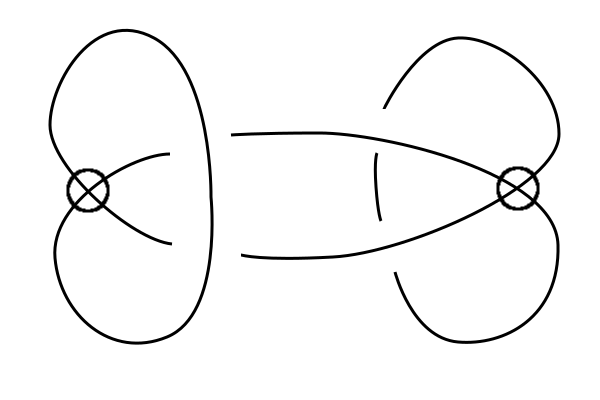}\]

However, in the virtual knotted case orientability and 2-colorability need not 
coincide: the diagram below cannot be given a consistent source-saddle 
orientation, but is 2-colorable:
\[\includegraphics{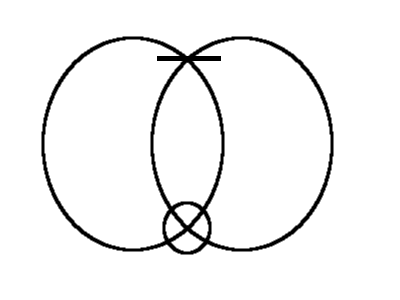}\]

To characterize orientability for virtual knotted surface diagrams, we
introduce Gauss diagrams for virtual marked vertex diagrams.
In light of the move in remark \ref{rem1}, we can without loss of generality 
consider only diagrams with a single na\"ive component, which we write as a 
circle
around which we write crossing labels representing over/under crossings points
and two points representing each saddle point. The two instances of each 
crossing are then connected by chords, with the classical crossing chords 
directed toward the undercrossing point and the saddle crossing chords
marked to indicate the direction of the saddle as depicted.
\[\includegraphics{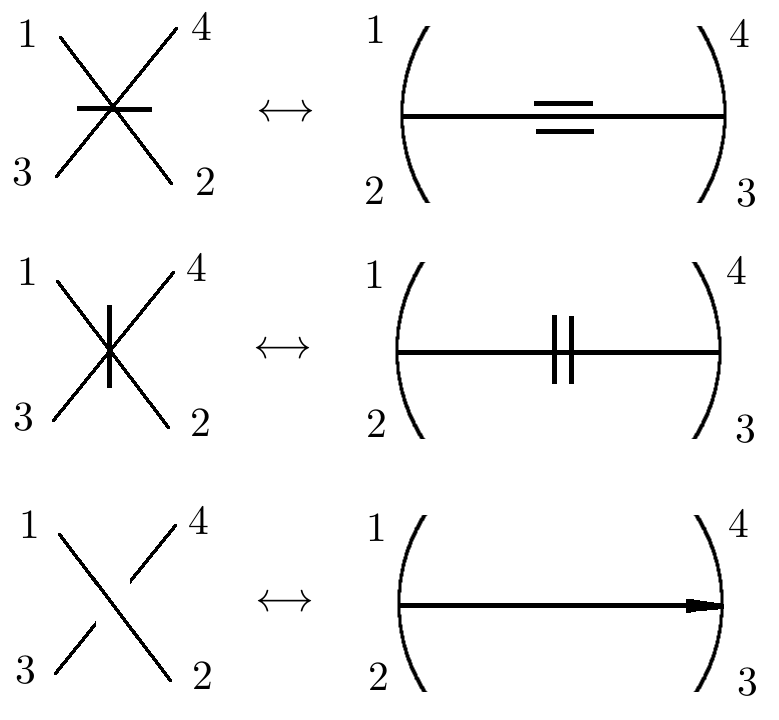}\]
The portions of the circles between endpoints of either chords or arrows 
correspond to the semiarcs of the diagram.
\[\includegraphics{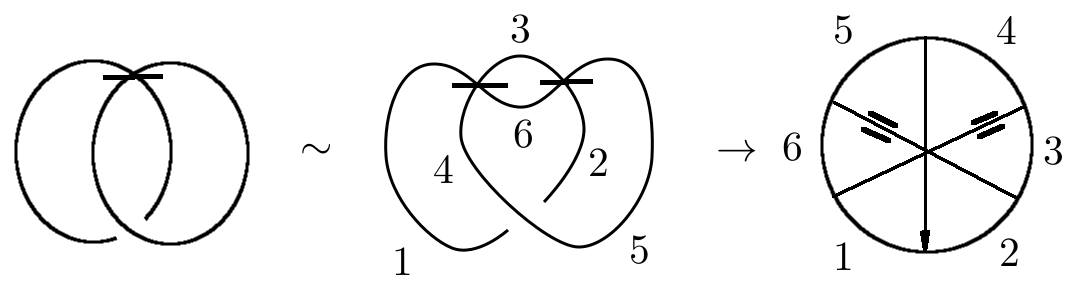}\]

We then have the following characterization of orientability:

\begin{proposition}
A marked vertex Gauss diagram is orientable iff for every 
chord $C$ the number of chord endpoints between the two endpoints of $C$
is even.
\end{proposition}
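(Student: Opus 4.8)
\emph{Proof proposal.} By Remark~\ref{rem1} we may assume the diagram has a single na\"ive component, so its Gauss diagram is a single circle $\Gamma$ carrying $2n$ marked points (the endpoints of the $n$ chords), cutting $\Gamma$ into $2n$ arcs, one per semiarc. The plan is to reformulate the existence of an orientation as the solvability of a $\mathbb{Z}/2$-linear system on these arcs and then read off the obstruction. Fix a traversal direction on $\Gamma$ and record an orientation of the diagram by the function $\varepsilon\colon\{\text{arcs of }\Gamma\}\to\mathbb{Z}/2$ with $\varepsilon(a)=0$ if the semiarc $a$ is oriented along the traversal and $\varepsilon(a)=1$ otherwise; then an orientation of the diagram is exactly a choice of $\varepsilon$ satisfying, at each crossing, the constraint imposed on the four arcs surrounding its two endpoints by the pass-through rule (classical) or the source-sink rule (saddle).

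Next I would read those constraints off the pictures introduced in Section~\ref{RBI}. Walking through a chord endpoint, the relevant rule relates $\varepsilon$ on the two arcs flanking that endpoint, and the two endpoints of a single chord — together with the over/under or saddle data the chord carries — impose in addition one relation tying the flanking arcs at one endpoint to those at the other. The outcome I expect is a uniform description: at each marked point $\varepsilon$ ``toggles'' in a prescribed way, and each chord $C$ contributes one further relation of the shape $\varepsilon(a)=\varepsilon(a')\,(+1)$, where $a,a'$ are the arcs meeting the two endpoints of $C$. With the constraints in hand the system is a cycle of sign-toggles on $\Gamma$ subject to these chord relations, which I would solve by a parity count: propagating $\varepsilon$ once around $\Gamma$ is consistent because the toggles sum to $0$ in $\mathbb{Z}/2$ (they come in pairs, two per chord), and what remains is to satisfy the chord relations. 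For a given chord $C$, after cancelling the toggles passed between its two endpoints, its relation reduces to the congruence that the number of chord endpoints strictly between the endpoints of $C$ is even. Hence a consistent $\varepsilon$, i.e.\ an orientation of the diagram, exists if and only if this holds for every chord $C$. (The two arcs of $\Gamma$ cut off by $C$ carry endpoint counts summing to $2n-2$, hence of equal parity, so the statement is unambiguous; and virtual crossings require no attention since orientations are unaffected by the detour move.) Both directions of the equivalence come out of this single computation.

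The step I expect to be the main obstacle is the middle one: extracting the precise local constraints from the pass-through and especially the source-sink pictures — where the marking on a saddle chord breaks the symmetry between its two endpoints — and checking that, once classical and saddle endpoints are put on the same footing, the net parity of the toggles accumulated along a subarc of $\Gamma$ agrees with the number of chord endpoints on that subarc. Once this bookkeeping is in place the rest is routine $\mathbb{Z}/2$-linear algebra, and the remaining reductions are supplied by Remark~\ref{rem1} and by the detour move.
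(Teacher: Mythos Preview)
Your approach is essentially the same as the paper's: the paper observes directly that traversing the na\"ive component, each chord endpoint reverses the local orientation (justified by the source--sink picture at a saddle), and obtains the parity condition from this, with the converse given by fixing an orientation on one semiarc and alternating at every endpoint. Your $\mathbb{Z}/2$-linear reformulation via $\varepsilon$ is a formalization of the same argument, and the ``main obstacle'' you flag---extracting the local toggling rule from the pass-through and source--sink pictures---is exactly the one step the paper carries out (by picture) rather than leaving as an outline.
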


\begin{proof}
Traveling around the naive component of an oriented diagram, each chord 
endpoint reverses the local orientation. Since each chord represents a
source-sink oriented saddle crossing, the local picture must be as depicted:
\[\includegraphics{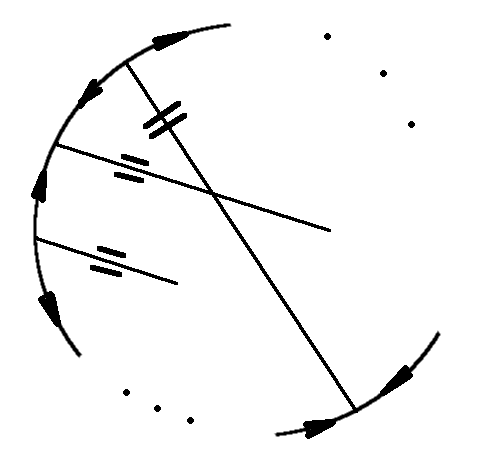}\]
Thus, we need an even
number of switches between the ends of $C$ on both sides.

Conversely, if every cord meets the stated criteria, then there are two 
choices of global orientation for the diagram, determined by choosing a 
direction for one semiarc and alternating at every chord endpoint; the 
condition that every chord has an even number of endpoints between its two ends 
implies that both such orientations are source-sink for each saddle crossing
and pass-through for each classical crossing.
\end{proof}

\section{\large\textbf{Questions}}\label{Q}

We conclude with some questions for future research.
\begin{itemize}
\item What kinds of enhancements of the bikei counting invariant are there? 
Cocycle enhancements, for instance, generally require orientation.
\item What happens when we add nontrivial operations at the virtual crossings?
\end{itemize}

\bibliography{sn-pr}{}
\bibliographystyle{abbrv}

%

%
%
%
%
%
%
%
%
%
%

\bigskip

\noindent
\textsc{Department of Mathematical Sciences \\
Claremont McKenna College \\
850 Columbia Ave. \\
Claremont, CA 91711}

\end{document}